\theoremstyle{plain}
\newtheorem{thm}{Theorem}[section]
\newtheorem{pro}[thm]{Proposition}
\newtheorem{lem}[thm]{Lemma}
\newtheorem{cor}[thm]{Corollary}
\newtheorem{exa}[thm]{Example}
\newtheorem{rmk}[thm]{Remark}
\newcommand{\Z}{\mathbb{Z}}
\newcommand{\N}{\mathbb{N}}
\DeclareMathOperator{\PAut}{PAut}
\DeclareMathOperator{\Inn}{Inn}
\DeclareMathOperator{\Cr}{Cr}
\DeclareMathOperator{\Aut}{Aut}
\DeclareMathOperator{\Dih}{Dih}
\def\l{\langle}
\def\r{\rangle}
\begin{document}

\title[A finiteness condition on centralizers]%
{A finiteness condition on centralizers\\ in locally finite groups}

\author[G.A. Fern\'andez-Alcober]{Gustavo A. Fern\'andez-Alcober}
\address{Matematika Saila\\ Euskal Herriko Unibertsitatea UPV/EHU\\
48080 Bilbao, Spain. {\it E-mail address}: {\tt gustavo.fernandez@ehu.eus}}

\author[L. Legarreta]{Leire Legarreta}
\address{Matematika Saila\\ Euskal Herriko Unibertsitatea UPV/EHU\\
48080 Bilbao, Spain. {\it E-mail address}: {\tt leire.legarreta@ehu.eus}}

\author[A. Tortora]{Antonio Tortora}
\address{Dipartimento di Matematica\\ Universit\`a di Salerno\\
Via Giovanni Paolo II, 132\\ 84084 Fisciano (SA)\\ Italy. {\it E-mail address}: {\tt antortora@unisa.it}}

\author[M. Tota]{Maria Tota}
\address{Dipartimento di Matematica\\ Universit\`a di Salerno\\
Via Giovanni Paolo II, 132\\ 84084 Fisciano (SA)\\ Italy. {\it E-mail address}: {\tt mtota@unisa.it}}

\thanks{The first two authors are supported by the Spanish Government, grants
MTM2011-28229-C02-02 and MTM2014-53810-C2-2-P, and by the Basque Government, grant IT753-13. The last two authors would like to thank the Department of Mathematics at the University of the Basque Country for its excellent hospitality while part of this paper was being written; they also wish to thank G.N.S.A.G.A. (INdAM) for financial support.}

\keywords{Centralizers, locally finite groups\vspace{3pt}}
\subjclass[2010]{20F50, 20E34}

\begin{abstract}
We consider a finiteness condition on centralizers in a group $G$, namely that $|C_G(x):\langle x \rangle|<\infty$ for every $\langle x \rangle \ntriangleleft G$. For periodic groups, this is the same as $|C_G(x)|<\infty$ for every $\langle x \rangle \ntriangleleft G$. We give a full description of locally finite groups satisfying this condition. As it turns out, they are a special type of cyclic extensions of Dedekind groups. We also study a variation of our condition, where the requirement of finiteness is replaced with a bound: $|C_G(x):\langle x \rangle|\le n$ for every $\langle x \rangle \ntriangleleft G$, for some fixed $n$. In this case, we are able to extend our analysis to the class of periodic locally graded groups.
\end{abstract}

\maketitle

\section{Introduction}

In the present paper we focus on a natural finiteness condition on centralizers of elements in a locally finite group. Our motivation stems from our previous works \cite{FLTT} and \cite{FLTT2}, where we dealt with two restrictions on centralizers in finite $p$-groups, namely that $|C_G(x):\langle x \rangle|\le n$ either for all $x\in G\smallsetminus Z(G)$, or for all
$x\in G$ such that $\langle x \rangle \ntriangleleft G$. Actually, in \cite{FLTT}, we studied the first of these conditions in the case of general finite groups.

Given a group $G$, we say that $G$ is an FCI-group (FCI for `finite centralizer index') provided that
\begin{equation}
\label{FCI}
|C_G(x):\langle x \rangle| < \infty
\quad
\text{for every $\langle x \rangle \ntriangleleft G$.}
\end{equation}
For periodic groups this condition is the same as the simpler
\begin{center}
\label{finite centralizer}
$|C_G(x)| < \infty
\quad
\text{for every $\langle x \rangle \ntriangleleft G$.}$
\end{center}
We can then ask for the existence of a uniform bound for the indices in (\ref{FCI}), thus getting the condition that, for some positive integer $n$,
\begin{equation}
\label{BCI}
|C_G(x):\langle x \rangle| \le n
\quad
\text{for every $\langle x \rangle \ntriangleleft G$,}
\end{equation}
in which case we speak of BCI-group (BCI for `bounded centralizer index').

It is also natural to look at the same condition as in (\ref{FCI}) with
non-central elements instead of elements generating a non-normal subgroup, which reads
\begin{equation}
\label{strong FCI}
|C_G(x):\langle x \rangle| < \infty
\quad
\text{for every $x\in G\smallsetminus Z(G)$.}
\end{equation}
However, this latter condition has such a strong implication in our context that it does not make sense to consider it. In fact,
it is easy to see that every periodic group containing an infinite abelian subgroup, and in particular every infinite locally finite group, is abelian whenever (\ref{strong FCI}) is satisfied.

\vspace{8pt}

There are well-known results that give information about the structure of a group simply from the knowledge that the centralizer of one element is finite.
For example, Shunkov \cite[page 263]{Sh} proved that a periodic group having an involution with finite centralizer is necessarily locally finite.
This property does not extend to non-involutions, but by combining results of Hartley and Meixner
\cite{HM}, Fong \cite{Fo} and Khukhro \cite{Kh} with the classification of finite simple groups, it follows that a locally finite group having an element of prime order with finite centralizer has a nilpotent subgroup of finite index \cite[Corollary 5.4.1]{Kh}. See also \cite{Shu} for a more recent account.

In (\ref{FCI}) and (\ref{BCI}), we are imposing conditions on a broad set of centralizers and, as a consequence, the results that we obtain are much more precise. Indeed, in Theorem \ref{determination locally finite FCI-groups}, we will completely determine the structure of infinite locally finite FCI-groups. More precisely, we will show that such a group is either a Dedekind group, or can be constructed as an extension of a certain infinite periodic Dedekind group $D$ by an appropriate power automorphism of $D$ of finite order. The fact that Dedekind groups arise in this context is not surprising, since they vacuously satisfy the FCI-condition. As a consequence of Theorem \ref{determination locally finite FCI-groups} we will obtain that, in the realm of locally finite groups, FCI- and BCI-conditions are equivalent (Corollary \ref{locally finite FCI->BCI}).

We will also deal with locally graded periodic BCI-groups, by showing that such groups are locally finite (Theorem \ref{graded}).
Recall that a group is \emph{locally graded\/} if every non-trivial finitely generated subgroup has a non-trivial finite image. The class of locally graded groups is rather wide, since it includes locally (soluble-by-finite) groups, as well as residually finite groups, and so in particular free groups. The restriction to locally graded groups is motivated to avoid Tarski monster groups, that is, infinite (simple) $p$-groups, for $p$ a prime, all of whose proper non-trivial subgroups are of order $p$. Obviously, Tarski monster groups are BCI-groups.

It remains unclear whether Theorem \ref{graded} holds for all FCI-groups. This is related to the following question:
{\em Given a periodic residually finite group $G$ in which the centralizer of each non-trivial element is finite, is $G$ locally finite?}
Observe that the well-known examples of finitely generated infinite periodic groups constructed by Golod \cite{Go}, Grigorchuk \cite{Gr}, and Gupta-Sidki \cite{GS} are residually finite but not FCI-groups (see \cite[Theorem 1.1]{KS}, \cite[Theorem 1]{Roz} and \cite[Theorem 2]{Sid}, respectively).

\vspace{8pt}

\noindent
\textit{Notation\/}. We use mostly standard notation in group theory. In particular, if $G$ is a finite $p$-group, we denote by $\Omega_1(G)$ the subgroup generated by all elements of $G$ of order $p$.
If $G$ is a periodic group, we write $\pi(G)$ for the set of prime divisors of the orders of the elements of $G$.
Furthermore, if $G$ is periodic and
nilpotent, $G_p$ denotes the unique Sylow $p$-subgroup of $G$, and if $\varphi$ is an automorphism of $G$, then $\varphi_p$ stands for the restriction of $\varphi$ to $G_p$.
On the other hand, we use $\prod$ to denote the direct product (i.e.\ restricted cartesian product) of a family of groups, and $\Cr$ for the unrestricted cartesian product.  Finally, we write $\Z_p$ for the ring of $p$-adic integers.
Also, if $R$ is a ring, we let $R^{\times}$ denote the group of units of $R$.

\section{Examples and general properties}

In this section we present some examples and general properties of FCI- and BCI-groups.

A prototypical source of FCI-groups comes from the family of the so-called generalized dihedral groups $\Dih(A)$, where $A$ is an arbitrary abelian group. Recall that $\Dih(A)$ is the semidirect product of $A$ with a cyclic group of order $2$ acting on $A$ by inversion.

\begin{exa}\label{dih}{\rm
Given an arbitrary abelian group $A$, then $\Dih(A)$ is non-abelian if and only if $A$ is not elementary abelian.
In that case, a subgroup $\langle x \rangle$ is non-normal if and only if $x\not\in A$, and then
$C_A(x)$ is the subgroup of all elements of $A$ of order at most $2$.
Consequently, a non-abelian generalized dihedral group $\Dih(A)$ is an FCI-group if and only if it is a BCI-group, and this happens if and only if $A$ is of finite $2$-rank.}
\end{exa}

Recall that the rank of an abelian $p$-group is the dimension, as a vector space over
the field with $p$ elements, of the subgroup formed by the elements of order at most $p$. For an abelian $p$-group, to have finite rank is equivalent to requiring that the group is a direct sum of finitely many cyclic and quasicyclic groups \cite[4.3.13]{Ro}. If $A$ is an abelian group, the $p$-rank of $A$ is defined as the rank of
$A_p$, and the torsion-free rank, or $0$-rank, of $A$ is the cardinality of a maximal independent subset of elements of $A$ of infinite order.  More generally, a group $G$ is said to have finite (Pr\"ufer) rank $r$ if every finitely generated subgroup of $G$ can be generated by $r$ elements and $r$ is the least such integer. We write $\hat r(G)$ for $r$.

\vspace{8pt}

The classes of FCI- and BCI-groups are clearly closed under taking subgroups, but are not usually closed under taking quotients. For example, if $F$ is a free group, then $C_F(x)$ is cyclic for every $x\in F$, $x\ne 1$ (see, for instance, \cite[Proposition 2.19]{LS}), and in particular $F$ is an FCI-group. Since not all groups are FCI-groups, it follows that the property of being an FCI-group is not hereditary for quotients.

Next, consider $G=\Dih(A)$, where $A$ is torsion-free abelian of infinite $0$-rank.
Then $G$ is a BCI-group, as shown in Example \ref{dih}. On the other hand, if $N$ is the subgroup consisting of all fourth powers of elements of $A$, then $G/N=\Dih(A/N)$ is not a BCI-group, since $A/N$ is of infinite $2$-rank.

However, quotients by a finite normal subgroup have a better behaviour, as we see in the following proposition.

\begin{pro}
\label{G/N FCI-group}
Let $G$ be a group, and let $N$ be a finite normal subgroup of $G$.
Put $\overline G=G/N$. If $|C_G(x):\langle x \rangle|$ is finite, then
$$|C_{\overline G}(\overline x):\langle \overline x \rangle | \le |N| \, |C_G(x):\langle x \rangle|$$
is also finite. In particular, if $G$ is an FCI-group then so is $G/N$. Similarly for BCI-groups.
\end{pro}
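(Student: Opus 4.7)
The plan is to work with the preimage of $C_{\overline G}(\overline x)$ in $G$ and exploit the commutator map into $N$. Let $H=\{g\in G:[g,x]\in N\}$. Since $[g,x]\in N$ is equivalent to $\overline g\in C_{\overline G}(\overline x)$, we have $H/N=C_{\overline G}(\overline x)$, while $\langle x\rangle N/N=\langle\overline x\rangle$. Hence
\[
|C_{\overline G}(\overline x):\langle\overline x\rangle|
= |H:\langle x\rangle N|
\le |H:\langle x\rangle|
= |H:C_G(x)|\cdot|C_G(x):\langle x\rangle|,
\]
using that $\langle x\rangle\le C_G(x)\le H$ and that $\langle x\rangle\le\langle x\rangle N$.

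The key step is to bound $|H:C_G(x)|$ by $|N|$. For this I would consider the map $\varphi:H\to N$ given by $\varphi(g)=[g,x]$. This is well-defined by the definition of $H$, and a direct calculation (rearranging $[g_1,x]=[g_2,x]$) shows that $g_2g_1^{-1}$ centralizes $x$; thus the fibres of $\varphi$ are exactly the left cosets of $C_G(x)$ in $H$, so $|H:C_G(x)|=|\imm\varphi|\le|N|$. Combining this with the chain of inequalities above yields the desired bound $|C_{\overline G}(\overline x):\langle\overline x\rangle|\le|N|\,|C_G(x):\langle x\rangle|$.

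Finally, to deduce the FCI (and analogously the BCI) assertion, I would observe the standard fact that normality is preserved under quotients: if $\langle x\rangle\triangleleft G$ then $\langle\overline x\rangle=\langle x\rangle N/N\triangleleft\overline G$. Contrapositively, whenever $\langle\overline x\rangle\ntriangleleft\overline G$ we also have $\langle x\rangle\ntriangleleft G$, so the FCI hypothesis on $G$ gives $|C_G(x):\langle x\rangle|<\infty$, and the inequality just established transfers the finiteness (respectively, the uniform bound, namely $n$ replaced by $|N|n$) to $\overline G$.

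I do not foresee a real obstacle here; the only subtle point is recognising that the commutator map $g\mapsto[g,x]$, although not a homomorphism in general, still has cosets of $C_G(x)$ as fibres, which is exactly what is needed to compare $|H:C_G(x)|$ with $|N|$.
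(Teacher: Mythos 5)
Your proof is correct and rests on the same key mechanism as the paper's: the map $g\mapsto[g,x]$ into the finite subgroup $N$ has fibres which are cosets of $C_G(x)$, giving the factor $|N|$, combined with the index $|C_G(x):\langle x\rangle|$. The paper phrases this with transversals of $\langle\overline x\rangle$ in $C_{\overline G}(\overline x)$ rather than via the index factorization $|H:\langle x\rangle|=|H:C_G(x)|\,|C_G(x):\langle x\rangle|$ in the preimage subgroup $H$, but the argument is essentially identical.
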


\begin{proof}
Let $T$ be a subset of $G$ such that $\overline T$ is a transversal of
$\langle \overline x \rangle$ in $C_{\overline G}(\overline x)$,
and put $X=\{[x,t]\mid t\in T\}$. Since $X$ is contained in $N$ and $N$ is finite,
we can choose $t_1,\ldots,t_r \in T$, with $r\le |N|$, such that
\[
X = \{ [x,t_1], \ldots, [x,t_r] \}.
\]
Now if $t\in T$ then $[x,t]=[x,t_i]$ for some $i\in\{1,\ldots,r\}$, and consequently $tt_i^{-1}\in C_G(x)$.
If $S$ is a transversal of $\langle x \rangle$ in $C_G(x)$, then
$tt_i^{-1}\in \langle x \rangle s$ for some $s\in S$.
Hence $\overline t \in \langle \overline x \rangle \overline s \overline{t_i}$, and
$|\overline T | \le r|S|\le |N| |C_G(x):\langle x \rangle|$, which proves the result.
\end{proof}

One can readily check that the centre of any periodic FCI-group is finite. Then the previous proposition
yields that the central quotient of a periodic FCI-group is again an FCI-group. Similarly for BCI-groups.

The next result will play an important role in the investigation of infinite locally finite FCI-groups. Recall that the FC-centre of a group $G$ is the subgroup consisting of all elements whose conjugacy class in $G$ is finite, and that every infinite locally finite group always contains an infinite abelian subgroup (see \cite[14.3.7]{Ro}).

\begin{pro}
\label{infinite abelian}
Let $G$ be an infinite periodic FCI-group, and let $D$ be the FC-centre of $G$.
Then:
\begin{enumerate}
\item
$D$ consists of all elements $x\in G$ such that $\langle x \rangle \lhd G$, and every element of $G$ acts on $D$ by conjugation as a power automorphism.
In particular, $G'\leq C_G(D)$ and $D$ is a Dedekind group.
\item
If $G$ contains an infinite abelian subgroup $A$, then $A\le D$.
\end{enumerate}
\end{pro}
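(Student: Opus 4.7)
My plan is to prove (i) first, with the characterization of $D$ being the crux, and then (ii) follows cleanly from (i) by a short contradiction.

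For (i), I would establish the equality $D = \{x \in G : \langle x \rangle \lhd G\}$ by showing both inclusions. If $\langle x \rangle \lhd G$, then conjugation gives a homomorphism $G \to \Aut(\langle x \rangle)$ with kernel $C_G(x)$; since $x$ has finite order (because $G$ is periodic), $\Aut(\langle x \rangle)$ is finite, and hence $C_G(x)$ has finite index in $G$, so $x \in D$. Conversely, suppose $x \in D$ but $\langle x \rangle$ is not normal. Then the FCI hypothesis yields $|C_G(x):\langle x \rangle| < \infty$, while $x \in D$ gives $[G:C_G(x)] < \infty$. Multiplying, $[G:\langle x \rangle] < \infty$; since $\langle x \rangle$ is finite by periodicity, $G$ itself would be finite, contradicting the hypothesis.

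With this description in hand, for every $y \in D$ and every $g \in G$, normality of $\langle y \rangle$ forces $y^g \in \langle y \rangle$, so conjugation by $g$ is a power automorphism of the (characteristic) subgroup $D$. I would then invoke the classical result that the group of power automorphisms of any group is abelian (see \cite[13.4.3]{Ro}): the image of $G$ in $\Aut(D)$ under conjugation is therefore abelian, which gives $G' \le C_G(D)$. Taking $g$ to lie in $D$, the same power-automorphism property says that every cyclic, and hence every, subgroup of $D$ is normal in $D$, so $D$ is a Dedekind group.

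For (ii), given an infinite abelian subgroup $A$ of $G$ and an arbitrary $a \in A$, we have $A \le C_G(a)$, so $C_G(a)$ is infinite; on the other hand $\langle a \rangle$ is finite by periodicity. If $\langle a \rangle$ were non-normal in $G$, the FCI condition would force $|C_G(a):\langle a \rangle| < \infty$ and hence $C_G(a)$ finite, a contradiction. Thus $\langle a \rangle \lhd G$ and by (i) $a \in D$, giving $A \le D$.

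The only non-routine ingredient is Cooper's theorem that power automorphisms commute, and the recurring trick throughout is the clean contradiction between the finiteness imposed by FCI when $\langle x \rangle$ is non-normal and the infiniteness coming either from $x \in D$ in an infinite group or from the presence of an infinite abelian subgroup in $C_G(x)$.
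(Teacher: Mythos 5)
Your proof is correct and is essentially the argument the paper has in mind: the paper dismisses this proposition as ``a straightforward application of the definition,'' and your write-up simply supplies the details (the finite-index/finite-order interplay for both inclusions, Cooper's theorem that $\PAut D$ is abelian for $G'\le C_G(D)$, and the contradiction argument for (ii)). No gaps.
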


\begin{proof}
It is just a straightforward application of the definition.
\end{proof}

\section{Centralizers of power automorphisms}

A crucial step in our determination of locally finite FCI-groups is the analysis of the centralizer of a power automorphism of finite order of a periodic Dedekind group, which is the goal of this section. An automorphism of a group $G$ is a \emph{power automorphism\/} if it sends every element $x\in G$ to a power of $x$. Power automorphisms form an abelian subgroup of $\Aut G$, which we denote by $\PAut G$. For a periodic abelian group, we rely on the following result of Robinson \cite[Lemma 4.1.2]{Ro3}, which we write in a bit more detailed way for our convenience.

\begin{thm}
\label{PAut A robinson}
Let $A$ be an abelian group.
Then the following hold:
\begin{enumerate}
\item
If $A$ is periodic and $\varphi$ is an automorphism of $A$, then $\varphi\in\PAut A$ if and only if $\varphi_p\in\PAut A_p$ for every $p\in\pi(A)$.
As a consequence, the map
\[
\varphi \longmapsto (\varphi_p)_{p\in\pi(A)}
\]
yields an isomorphism between $\PAut A$ and the cartesian product
$\Cr_{p\in\pi(A)} \, \PAut A_p$.
\item
If $A$ is a $p$-group for a prime $p$, then $\PAut A \cong R^{\times}$, where $R=\Z_p$ if $\exp A=\infty$ and $R=\Z/p^n\Z$ if $\exp A=p^n<\infty$.
In both cases, an isomorphism can be obtained by sending every $t\in R^{\times}$ to the automorphism given by $a\mapsto a^t$ for all $a\in A$.
\end{enumerate}
\end{thm}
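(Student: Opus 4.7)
The plan is to reduce part (i) to the primary decomposition of periodic abelian groups and to handle each prime separately in part (ii), splitting the latter according to whether $\exp A$ is finite or infinite.

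For (i), write $A = \bigoplus_{p \in \pi(A)} A_p$; each primary component is characteristic, so every $\varphi \in \Aut A$ restricts to $\varphi_p \in \Aut A_p$, and being a power automorphism is evidently preserved under restriction. For the converse, given $a \in A$ of finite order, decompose $a = a_{p_1} \cdots a_{p_r}$ with $a_{p_i} \in A_{p_i}$; if $\varphi(a_{p_i}) = a_{p_i}^{t_i}$, the Chinese Remainder Theorem produces an integer $t$ with $t \equiv t_i \pmod{|a_{p_i}|}$ for every $i$, and then $\varphi(a) = a^t$. The assignment $\varphi \mapsto (\varphi_p)_{p}$ is a group homomorphism, injective because elements of $A$ are determined by their primary components, and surjective because a family of power automorphisms acts coordinatewise as a global power automorphism; this yields the stated isomorphism $\PAut A \cong \Cr_{p \in \pi(A)} \PAut A_p$.

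For part (ii) with $\exp A = p^n$, the key input is the Pr\"ufer--Kulikov theorem: an abelian $p$-group of bounded exponent is a direct sum of cyclic groups, and since $\exp A = p^n$ at least one summand $\langle a_0 \rangle$ has order $p^n$. Write $A = \langle a_0 \rangle \oplus B$ and let $\varphi \in \PAut A$, with $\varphi(a_0) = a_0^t$; then $t$ must be coprime to $p$ (otherwise $\varphi(a_0)$ would have order less than $p^n$), and hence represents a unit in $\Z/p^n\Z$. To see that $\varphi$ acts as the $t$th power everywhere, pick $b \in B$ with $\varphi(b) = b^s$, write $\varphi(a_0 b) = (a_0 b)^u$, and expand via the direct sum: from $a_0^u b^u = a_0^t b^s$ one reads off $u \equiv t \pmod{p^n}$ and $u \equiv s \pmod{|b|}$, and since $|b|$ divides $p^n$ this forces $s \equiv t \pmod{|b|}$, i.e.\ $\varphi(b) = b^t$. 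Hence $\varphi(a) = a^t$ for every $a \in A$, and the map $t \mapsto (a \mapsto a^t)$ is the desired isomorphism $(\Z/p^n\Z)^\times \cong \PAut A$.

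For $\exp A = \infty$, filter $A$ by the characteristic subgroups $\Omega_n(A) = \{a \in A : a^{p^n} = 1\}$. Each $\Omega_n(A)$ has exponent exactly $p^n$ (since $A$ contains elements of every $p$-power order), so the bounded case supplies a unique $t_n \in (\Z/p^n\Z)^\times$ with $\varphi|_{\Omega_n(A)}(a) = a^{t_n}$. Comparing restrictions on $\Omega_m(A) \subseteq \Omega_n(A)$ for $m \le n$ forces $t_m \equiv t_n \pmod{p^m}$, so $(t_n)$ defines an element $t \in \varprojlim_n (\Z/p^n\Z)^\times = \Z_p^\times$; since $A = \bigcup_n \Omega_n(A)$, the rule $a \mapsto a^t$ (with $t$ reduced modulo $|a|$) recovers $\varphi$. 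The reverse assignment $t \mapsto (a \mapsto a^t)$ is manifestly a homomorphism, closing the isomorphism. The main step where something non-trivial is invoked is the bounded exponent case, which rests on the existence of a cyclic direct summand of maximal order; once that is in hand, everything else is a matter of propagating a local exponent to a global one via the CRT or the $p$-adic inverse limit.
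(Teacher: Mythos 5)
Your proof is correct, but note that the paper does not prove this statement at all: it is quoted verbatim (with some added detail) from Robinson's Lemma 4.1.2 in \emph{Groups in which normality is a transitive relation}, so there is no in-paper argument to compare against. What you have written is a complete, self-contained proof along the standard lines: part (i) via the primary decomposition plus the Chinese Remainder Theorem, and part (ii) via Pr\"ufer's theorem (a bounded abelian $p$-group is a direct sum of cyclic groups) to extract a cyclic summand $\langle a_0\rangle$ of maximal order $p^n$, whose exponent $t$ is then propagated to all of $A$ by the direct-sum comparison $a_0^u b^u = a_0^t b^s$; the unbounded case is correctly handled by passing to the inverse limit over the characteristic filtration $\Omega_n(A)$, using that an abelian $p$-group of infinite exponent contains elements of every order $p^n$, so that each $t_n$ is unique and the compatible system defines an element of $\Z_p^{\times}$. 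The only points worth making explicit are minor: a general element of $A$ in the bounded case is $a_0^j b$ with $b\in B$, and the identity $\varphi(a_0^j b)=(a_0^j b)^t$ follows from commutativity once you know $\varphi(b)=b^t$; and injectivity of $t\mapsto(a\mapsto a^t)$ uses the existence of an element of order $p^n$ (respectively, of every order $p^n$). Your argument buys the reader a proof where the paper offers only a citation, at the cost of invoking Pr\"ufer's structure theorem, which is exactly the nontrivial input you correctly identify.
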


\begin{lem}
\label{cent of auto}
Let $A$ be an abelian $p$-group, where $p$ is a prime, and let $\varphi$ be a
power automorphism of $A$ of finite order $m>1$.
Then the following hold:
\begin{enumerate}
\item
If $m$ divides $p-1$, then $C_A(\varphi^k)=1$ for every $k\in\{1,\ldots,m-1\}$.
\item
If $m$ does not divide $p-1$ and $p>2$, then $\exp A<\infty$ and
$\Omega_1(A)\subseteq C_A(\varphi^k)$ for some $k\in\{1,\ldots,m-1\}$.
\item
If $p=2$ and $\exp A<\infty$, then $\Omega_1(A)\subseteq C_A(\varphi^k)$ for every
$k\in\{1,\ldots,m-1\}$.
\item
If $p=2$ and $\exp A=\infty$, then $m=2$ and $C_A(\varphi)=\Omega_1(A)$.
\end{enumerate}
\end{lem}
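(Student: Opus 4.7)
The plan is to identify $\varphi$ with multiplication by an element $t \in R^\times$ via the isomorphism in Theorem \ref{PAut A robinson}(ii), so that for every $k\ge 1$,
\[
C_A(\varphi^k) = \{a\in A : a^{t^k-1}=1\}.
\]
The whole lemma then amounts to controlling the $p$-adic valuation of $t^k-1$ in $R$, where $R=\Z_p$ if $\exp A=\infty$ and $R=\Z/p^n\Z$ if $\exp A=p^n$. Note that the order of $\varphi$ equals the order of $t$ in $R^\times$. I will also use freely the standard decomposition $R^\times\cong \F_p^\times\times (1+pR)$ for $p$ odd, where $\F_p^\times$ is cyclic of order $p-1$ and $1+pR$ is the unique $p$-Sylow (or pro-$p$ subgroup in the $p$-adic case).

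For part (i), since $m\mid p-1$ we must have $p$ odd, and $t$ reduces to an element of order exactly $m$ in $\F_p^\times$. Hence for $1\le k\le m-1$ the image of $t^k-1$ in $\F_p$ is nonzero, which means $t^k-1$ is a unit of $R$. Then $a^{t^k-1}=1$ forces $a=1$, proving $C_A(\varphi^k)=1$.

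For part (ii), I first rule out $\exp A=\infty$: for $p$ odd the torsion subgroup of $\Z_p^\times$ is $\F_p^\times$, which has order $p-1$, so an element of order $m\nmid p-1$ cannot exist. Thus $\exp A=p^n$ and $R^\times$ has order $p^{n-1}(p-1)$; the hypothesis $m\nmid p-1$ together with the prime factorization of $|R^\times|$ forces $p\mid m$. Now take $k=m/p$, which satisfies $1\le k\le m-1$ because $p\ge 2$ and $m\ge 2$. Then $t^k$ has order $p$ in $R^\times$, so $t^k$ lies in the Sylow $p$-subgroup $1+pR$, giving $t^k-1\in pR$. Every $a\in\Omega_1(A)$ satisfies $a^p=1$, hence $a^{t^k-1}=1$, i.e.\ $\Omega_1(A)\subseteq C_A(\varphi^k)$.

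For part (iii), $t\in R^\times$ is odd, so $t^k$ is odd and $t^k-1$ is even; thus any $a\in\Omega_1(A)$ (with $a^2=1$) satisfies $a^{t^k-1}=1$ automatically. For part (iv), $R=\Z_2$ and the torsion subgroup of $\Z_2^\times=\{\pm 1\}\times(1+4\Z_2)$ is $\{\pm 1\}$, so an element of finite order $m>1$ must be $t=-1$ and $m=2$; then $\varphi$ is inversion and $C_A(\varphi)=\{a\in A : a^2=1\}=\Omega_1(A)$. The main obstacle is part (ii): one must simultaneously exclude the infinite exponent case by exploiting the structure of $\Z_p^\times$ and then pick the right exponent $k=m/p$ so that $t^k$ sits inside $1+pR$; once the right structural decomposition of $R^\times$ is in hand, the rest is an elementary valuation check.
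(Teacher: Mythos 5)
Your proof is correct and follows essentially the same route as the paper: both identify $\varphi$ with a unit $t$ of $R=\Z/p^n\Z$ or $\Z_p$ via Theorem \ref{PAut A robinson}(ii), use the decomposition $R^{\times}\cong \F_p^{\times}\times(1+pR)$ to control when $t^k-1$ is a unit or lies in $pR$, and rule out $\exp A=\infty$ in (ii) and pin down $m=2$ in (iv) from the torsion subgroups of $\Z_p^{\times}$ and $\Z_2^{\times}$. The only (immaterial) differences are your choice $k=m/p$ in (ii), where the paper takes $k$ to be the $p'$-part of $m$, and your direct parity argument in (iii).
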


\begin{proof}
By (ii) of Theorem \ref{PAut A robinson}, the group $\PAut A$ is canonically isomorphic to the group of units $R^{\times}$, where
$R=\Z/p^n\Z$ or $\Z_p$, according as $\exp A=p^n<\infty$ or $\exp A=\infty$.

(i)
We have $p>2$ in this case.
Hence $R^{\times}=H\times J$, where the elements of $H$ correspond to the integers or
$p$-adic integers which are congruent to $1$ modulo $p$, and $J$ consists of the elements whose order divides $p-1$.
Let $k\in\{1,\ldots,m-1\}$, and let $t\in R^{\times}$ be such that $\varphi^k(a)=a^t$ for every $a\in A$.
Since the order of $\varphi^k$ divides $p-1$ and is greater than $1$, it follows that
$t\in J$ and $t\ne 1$.
Now, if $a\in A$ is of order $p$, then $\varphi^k(a)=a$ if and only if $t\in H$. This is impossible, since $H\cap J=1$.
Hence $\varphi^k$ does not fix any elements of order $p$ in $A$, and consequently
$C_A(\varphi^k)=1$.

(ii)
The torsion subgroup of $\Z_p^{\times}$ is of order $p-1$, since $p>2$.
Since $m$ does not divide $p-1$ in this case, $A$ must be of finite exponent, say $p^n$.
Thus $\PAut A\cong (\Z/p^n\Z)^{\times}$ is of order $p^{n-1}(p-1)$.
Then we can write $m=p^r k$ for some positive integers
$r$ and $k$.
Let $a\in A$ be an arbitrary element of order $p$.
Since $\langle a \rangle$ is $\varphi$-invariant and the order of $\varphi^k$ is a power of $p$, it follows that $\varphi^k$ fixes $a$.
Thus $\Omega_1(A)\subseteq C_A(\varphi^k)$.

(iii)
In this case, $\PAut A$ is a $2$-group.
Thus the argument given in the proof of (ii) applies to every power of $\varphi$, and $\Omega_1(A)\subseteq C_A(\varphi^k)$ for every $k$.

(iv)
The torsion subgroup of $\Z_2^{\times}$ has order $2$, and is generated by the inversion
automorphism.
Hence $m=2$ and $C_A(\varphi)=\Omega_1(A)$.
\end{proof}

\begin{lem}
\label{finite cent of autos}
Let $D$ be a periodic Dedekind group, and let $\varphi$ be a power automorphism of $D$ of finite order $m>1$.
Then the following two conditions are equivalent:
\begin{enumerate}
\item
$C_D(\varphi^k)$ is finite for every $k=1,\ldots,m-1$.
\vspace{5pt}
\item
$D_2$ is of finite rank and
$\displaystyle\prod_{p\in\pi_0\cup\pi_1} \, |D_p|$ is finite, where
\[
\pi_0 = \{ p\in \pi(D) \mid o(\varphi_p)<m \},
\]
and
\[
\pi_1 = \{ p\in \pi(D) \mid \text{$o(\varphi_p)=m$, $p>2$ and
$p\not\equiv 1\hspace{-10pt}\pmod m$} \}.
\]
\end{enumerate}
If these conditions hold then, for every $k=1,\ldots,m-1$, we have
\begin{equation}
\label{order centralizer phi^k}
|C_D(\varphi^k)|
\le
M \displaystyle\prod_{p\in\pi_0\cup\pi_1} \, |D_p|,
\end{equation}
where $M=|D_2|$ if $D_2$ is finite, and $M=2^{\hat r(D_2)}$ if $D_2$ is infinite.
\end{lem}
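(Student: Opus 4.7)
The strategy is to decompose $D$ into its Sylow components and reduce the problem at each prime to Lemma \ref{cent of auto}. Since $D$ is a periodic Dedekind group, the Sylow $p$-subgroups are normal and $D=\prod_{p\in\pi(D)}D_p$; since $\varphi$ is a power automorphism, it splits as $\varphi=\prod_p\varphi_p$ with each $\varphi_p$ a power automorphism of $D_p$ of some order $m_p$ dividing $m$. Consequently
\[
C_D(\varphi^k)=\prod_{p\in\pi(D)}C_{D_p}(\varphi_p^k)
\]
as a restricted direct product. For odd $p$ the group $D_p$ is abelian, so Lemma \ref{cent of auto} is directly available; for $p=2$ with $D_2$ Hamiltonian, one has $D_2\cong Q_8\times E$ with $E$ elementary abelian, and a direct check gives $\PAut(D_2)\cong \Inn(Q_8)$, of order $4$ and exponent $2$. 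Moreover such a Hamiltonian $D_2$ has finite Pr\"ufer rank if and only if it is finite, so in this case the statement reduces to $M=|D_2|$.

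For the implication (ii) $\Rightarrow$ (i) and the bound \eqref{order centralizer phi^k}, I would fix $k\in\{1,\dots,m-1\}$ and bound each factor of the displayed product. For odd $p\notin\pi_0\cup\pi_1$ one has $m_p=m$ and $m\mid p-1$, so Lemma \ref{cent of auto}(i) gives $C_{D_p}(\varphi_p^k)=1$. For odd $p\in\pi_0\cup\pi_1$ the trivial bound $|C_{D_p}(\varphi_p^k)|\le|D_p|$ suffices, and $|D_p|$ is finite by hypothesis. At $p=2$: if $m_2<m$ then $|D_2|$ already appears in $\prod_{p\in\pi_0\cup\pi_1}|D_p|$, so $D_2$ is finite and $|C_{D_2}(\varphi_2^k)|\le|D_2|=M$; otherwise $m_2=m$, and parts (iii)--(iv) of Lemma \ref{cent of auto} (together with the Hamiltonian observation above) give $|C_{D_2}(\varphi_2^k)|\le 2^{\hat r(D_2)}=M$, using that an abelian $p$-group of finite exponent and finite rank is automatically finite, while the infinite-exponent case forces $m=2$ and $C_{D_2}(\varphi)=\Omega_1(D_2)$ of order $2^{\hat r(D_2)}$. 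Multiplying the factors yields the claimed bound and, in particular, the finiteness of $C_D(\varphi^k)$.

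For the converse (i) $\Rightarrow$ (ii), I would assume $|C_D(\varphi^k)|<\infty$ for every $k\in\{1,\dots,m-1\}$. To control $\pi_0$, note that for each proper divisor $d$ of $m$ the choice $k=d$ annihilates $\varphi_p$ whenever $m_p=d$, so $D_p$ embeds into $C_D(\varphi^d)$; this yields both that every such $D_p$ is finite and that only finitely many primes with $m_p=d$ can have $D_p\ne1$. Summing over the finite set of proper divisors of $m$ gives $\prod_{p\in\pi_0}|D_p|<\infty$. To control $\pi_1$, observe that if $p\in\pi_1$ then $m=m_p$ does not divide $p-1$, and since the $p'$-part of $(\Z/p^n\Z)^\times$ has order $p-1$, this forces $p\mid m$; hence $\pi_1$ is contained in the finite set of prime divisors of $m$. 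For each such $p$, Lemma \ref{cent of auto}(ii) supplies an exponent $k'\in\{1,\dots,m-1\}$ with $\Omega_1(D_p)\subseteq C_{D_p}(\varphi_p^{k'})$, and combined with the finite exponent of $D_p$ this forces $|D_p|<\infty$. Finally, a parallel case analysis at $p=2$ (abelian vs.\ Hamiltonian; $m_2<m$ vs.\ $m_2=m$; $\exp D_2$ finite vs.\ infinite) yields $\hat r(D_2)<\infty$.

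The main obstacles will be (a) verifying that $\pi_1$ is finite, which rests on the $p$-adic structure of $\PAut D_p$ recorded in Theorem \ref{PAut A robinson}(ii) and forces $p\mid m_p$ whenever $m_p\nmid p-1$; and (b) the treatment of a Hamiltonian $D_2$, where Lemma \ref{cent of auto} is not directly applicable and one must separately check that $\PAut(Q_8\times E)$ has exponent $2$ and that finite Pr\"ufer rank collapses to finiteness for Hamiltonian Dedekind groups.
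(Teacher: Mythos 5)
Your proposal is correct and follows essentially the same route as the paper: decompose $C_D(\varphi^k)$ over the Sylow subgroups of $D$, kill the factors at odd primes outside $\pi_0\cup\pi_1$ via Lemma \ref{cent of auto}(i), bound the remaining odd factors trivially by $|D_p|$, and treat $p=2$ separately, with the case $k=d$ for each proper divisor $d$ of $m$ handling $\pi_0$ exactly as in the paper. The only divergences are cosmetic: you derive the finiteness of $\pi_1$ from the observation that $m\nmid p-1$ forces $p\mid m$, whereas the paper deduces it from the finiteness of $\prod_{p\in\pi_1}|\Omega_1(D_p)|$; and your explicit computation of $\PAut(Q_8\times E)$ is not actually needed, since an infinite $D_2$ of finite rank is automatically abelian (of infinite exponent) and a finite $D_2$ is absorbed into $M=|D_2|$.
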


\begin{proof}
Observe that
\[
|C_D(\varphi^k)| = \prod_{p\in\pi(D)} \, |C_{D_p}(\varphi_p^k)|,
\quad
\text{for every $k=1,\ldots,m-1$,}
\]
and that the order of $\varphi_p$ divides $m$ for all $p$.
Let $p\in\pi(D)$ be an odd prime which does not lie in $\pi_0\cup\pi_1$.
Then the order of $\varphi_p$ is $m$, and $m$ divides $p-1$.
By applying (i) of Lemma \ref{cent of auto} to the automorphism $\varphi_p$ of $D_p$, it follows that
$C_{D_p}(\varphi_p^k)=1$ for $k=1,\ldots,m-1$.
Hence
\begin{equation}
\label{factorisation cent}
|C_D(\varphi^k)| = \prod_{p\in\pi_0\cup\pi_1\cup\{2\}} \, |C_{D_p}(\varphi_p^k)|,
\quad
\text{for every $k=1,\ldots,m-1$.}
\end{equation}

We can deduce immediately from here that (ii) implies (i), and also the bound in
(\ref{order centralizer phi^k}).
Clearly, it suffices to show that, provided that $D_2$ is infinite, we have
\[
| C_{D_2}(\varphi_2^k) | \le 2^{\hat r(D_2)}, \quad \text{for every $k=1,\ldots,m-1$.}
\]
In this case, we have $2\not\in\pi_0$, and consequently $o(\varphi_2)=m$.
On the other hand, $D_2$ is abelian and $\exp D_2=\infty$, since $D_2$ is infinite but of finite rank.
By (iv) of Lemma \ref{cent of auto} to $\varphi_2$, we obtain that $m=2$ and
$C_{D_2}(\varphi_2)=\Omega_1(D_2)$.
This completes the proof of the first implication.

Let us now prove that (i) implies (ii).
So we assume that $C_D(\varphi^k)$ is finite for every $k=1,\ldots,m-1$.
We first prove that both $\prod_{p\in\pi_0} \, |D_p|$ and $\prod_{p\in\pi_1} \, |D_p|$ are finite.
If $p\in\pi_0$ then $o(\varphi_p)<m$, and so $\varphi_p^k$ is the identity on $D_p$ for some $k\in\{1,\ldots,m-1\}$.
We conclude from
(\ref{factorisation cent}) that $\prod_{p\in\pi_0} \, |D_p|<\infty$.
Now if $p\in\pi_1$, then $p>2$ and $D_p$ is abelian.
By (ii) of Lemma \ref{cent of auto}, there exists $k\in\{1,\ldots,m-1\}$ such that
$\Omega_1(D_p)\subseteq C_{D_p}(\varphi_p^k)$.
It follows that $\prod_{p\in\pi_1} \, |\Omega_1(D_p)|<\infty$.
In particular, $\pi_1$ is finite, and so is $\Omega_1(D_p)$ for every $p\in\pi_1$.
But, again by (ii) of Lemma \ref{cent of auto}, we know that $\exp D_p<\infty$
if $p\in\pi_1$.
Consequently, $D_p$ is finite for every $p\in\pi_1$, and $\prod_{p\in\pi_1} \, |D_p|<\infty$, as desired.
Finally, since a power automorphism necessarily fixes all elements of order $2$, we have
$\Omega_1(D_2)\subseteq C_{D_2}(\varphi_2)$ and $D_2$ is of finite rank.
\end{proof}

\section{Locally finite FCI-groups}

We start our investigation of infinite locally finite FCI-groups by looking at the quotient with respect to their FC-centre.

\begin{pro}
\label{G/D cyclic}
Let $G$ be an infinite locally finite FCI-group, and let $D$ be the FC-centre of $G$.
Then $G/D$ is a finite cyclic group.
\end{pro}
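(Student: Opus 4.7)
Since $G$ is infinite and locally finite, it contains an infinite abelian subgroup \cite[14.3.7]{Ro}, which by Proposition~\ref{infinite abelian}(ii) must lie in $D$; hence $D$ is infinite. By Proposition~\ref{infinite abelian}(i), conjugation yields a homomorphism $\rho\colon G\to\PAut D$ with abelian image (Theorem~\ref{PAut A robinson}). I first claim $C_G(D)\le D$: if $c\in C_G(D)$, then $\langle c\rangle Z(D)$ is an abelian subgroup containing the infinite abelian group $Z(D)$ and so, by Proposition~\ref{infinite abelian}(ii), lies in $D$, forcing $c\in D$. Thus $G/D$ is a quotient of the abelian group $G/C_G(D)$ and is itself abelian, periodic, and locally finite. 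To finish it suffices to show that $G/D$ is locally cyclic and of finite exponent, since a periodic abelian group with both properties is finite cyclic.

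For local cyclicity, assume for contradiction that $G/D$ contains a subgroup $\Z/q\Z\times\Z/q\Z$ for some prime $q$, and lift a basis to $g_1,g_2\in G\setminus D$. Using the decomposition $\PAut D\cong\Cr_{p\in\pi(D)}\PAut D_p$ and the isomorphism $\PAut D_p\cong R_p^{\times}$ from Theorem~\ref{PAut A robinson}, together with Lemma~\ref{cent of auto} at the prime $2$ (invoking that $D_2$ has finite rank, which is itself a consequence of Lemma~\ref{finite cent of autos} applied to any non-trivial $\varphi_g$), one verifies that the $q$-torsion subgroup of every $\PAut D_p$ is cyclic. Hence at each $p\in\pi(D)$ there is a non-zero relation $(a_p,b_p)\in\F_q^2$ with $(\varphi_{g_1}^{a_p}\varphi_{g_2}^{b_p})_p=1$. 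Because $D$ is infinite---either $\pi(D)$ is infinite or some $D_p$ is infinite---a pigeonhole argument over the finitely many lines in $\F_q^2$ produces a non-zero pair $(a,b)$ for which the non-trivial element $\varphi_{g_1}^a\varphi_{g_2}^b\in\PAut D$ centralizes an infinite subgroup of $D$, contradicting FCI applied to $g_1^ag_2^b\notin D$.

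For finite exponent, take $g\in G\setminus D$ and set $m=o(\varphi_g)>1$. Since $\ker\rho=C_G(D)\le D$, each non-trivial power $\varphi_g^k$ arises from $g^k\notin C_G(D)$, so that $C_D(\varphi_g^k)\le C_G(g^k)$ is finite (a small extra argument using the explicit structure of Hamiltonian groups handles the possibility $g^k\in D\setminus C_G(D)$ that only arises when $D$ is non-abelian). Lemma~\ref{finite cent of autos} then forces $D_2$ to have finite rank, the set $\pi_0(g)\cup\pi_1(g)$ to be finite, and $m\mid p-1$ for every $p\in\pi(D)\setminus(\pi_0(g)\cup\pi_1(g)\cup\{2\})$. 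When $\pi(D)$ is finite this gives the uniform bound $m\le\max_{p\in\pi(D)}(p-1)$, because the alternative $\pi(D)\subseteq\pi_0(g)\cup\pi_1(g)$ would force $D$ itself to be finite. The case $\pi(D)$ infinite is where I expect the main obstacle: one combines the local cyclicity just established with the observation that an element of $q^k$-power order in $G/D$ forces $q^k\mid p-1$ for cofinitely many $p\in\pi(D)$, and thereby rules out both a Pr\"ufer $q$-summand in $G/D$ (impossible as $k\to\infty$ for any fixed prime) and an infinite spread $\pi(G/D)$ across primes (via a second pigeonhole over the primes dividing $m$). Once finite exponent is established, $G/D$ is finite cyclic, as claimed.
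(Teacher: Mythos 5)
Your opening reductions are fine and essentially match the paper: $D$ is infinite, $C_G(D)\le D$, and $G/D$ is periodic abelian; moreover your treatment of the case where some Sylow subgroup $D_p$ is infinite (embedding $G/D$ into $\PAut D_p$, whose torsion is cyclic for $p$ odd, and handling $p=2$ via Lemma~\ref{cent of auto}) parallels the paper's first case. A small inaccuracy along the way: the $q$-torsion of $\PAut D_p$ is \emph{not} cyclic for $p=q=2$ when $D_2$ is finite of exponent at least $8$ (it is a Klein four-group inside $(\Z/2^n\Z)^{\times}$); your pigeonhole survives because a finite $D_2$ can simply be discarded, but the claim as stated is false. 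The Hamiltonian complication (powers $g^k$ landing in $D\smallsetminus Z(D)$, where $C_D(\varphi_g^k)$ has index $4$ and is infinite, so hypothesis (i) of Lemma~\ref{finite cent of autos} fails) is also only waved at.

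The genuine gap is the case you yourself flag: all $D_p$ finite and $\pi(D)$ infinite. There your argument extracts from Lemma~\ref{finite cent of autos} only constraints of the form ``$m\mid p-1$ for all $p$ outside the finite set $\pi_0(g)\cup\pi_1(g)$'', and these do \emph{not} yield a contradiction for elements $g$ of unbounded order: the exceptional sets $\pi_0(g)\cup\pi_1(g)$ are finite for each $g$ but are in no way uniformly bounded (FCI gives no uniform bound on $|C_G(x)|$ across different $x$), so for every $k$ the condition ``$q^k\mid p-1$ for cofinitely many $p\in\pi(D)$'' is perfectly consistent, and neither a Pr\"ufer summand nor an infinite spread of primes in $G/D$ is excluded by this bookkeeping. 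The reason is structural: everything you use comes from the finiteness of $C_D(x)=C_D(\varphi_x)$, whereas finiteness of $G/D$ in this case requires the finiteness of the \emph{whole} centralizer $C_G(x)$. The paper's proof supplies exactly this missing idea: for $xD$ of prime order, $x$ acts fixed-point-freely on $D_p$ for all $p\ge p_0$, so by \cite[10.5.1]{Ro} the map $d\mapsto[d,x]$ is surjective on $D$ modulo the finite subgroup $D^*=\prod_{p<p_0}D_p$; since $[g,x]\in D$ for every $g$ (as $G/D$ is abelian), every coset of $C_{\overline G}(\overline x)$ meets $\overline D$, giving $\overline G=C_{\overline G}(\overline x)\overline D$ and hence $|G:D|<\infty$. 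Cyclicity is then finished by comparing the kernels of the maps $G/D\to\PAut A_p$. Without this commutator-surjectivity step (or an equivalent use of the finiteness of $C_G(x)$ rather than $C_D(x)$), your proof of finite exponent, and hence of finiteness of $G/D$, does not close.
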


\begin{proof}
Of course, we may assume that $D$ is a proper subgroup of $G$.
By Proposition \ref{infinite abelian}, $D$ is a periodic infinite Dedekind group.
Also, $D$ consists of all elements of $G$ which generate a normal subgroup of $G$.
As a consequence, $C_G(x)$ is finite for every $x\in G\smallsetminus D$.
Let $A$ be the centre of $D$. If $E$ is an infinite subgroup of $A$ then $C_G(E)=D$, and consequently $G/D$ embeds in $\PAut E$. In particular, $G/D$ is abelian.

We first assume that $D_p$ is infinite for some $p\in\pi(D)$. Then also $A_p$ is infinite, and $G/D$ embeds in $\PAut A_p\cong R^{\times}$, where $R=\Z_p$ or $\Z/p^n\Z$, by (ii) of Theorem \ref{PAut A robinson}. Unless we are in the case that $p=2$ and $\exp A_2<\infty$, the periodic elements in $R^{\times}$ form a cyclic subgroup, and consequently $G/D$ is cyclic, as desired. So it suffices to see that the assumption that $p=2$ and $\exp A_2<\infty$ leads to a contradiction.
In that case, consider an element $x\in G\smallsetminus D$.
If we apply (iii) of Lemma \ref{cent of auto} to the power automorphism of $A_2$ induced by $x$, we get $\Omega_1(A_2)\subseteq C_G(x)$, which is finite. It follows that $A_2$ is finite, contrary to our assumption.

Now we assume that $D_p$ is finite for every $p\in\pi(D)$. Since $D$ is infinite, it follows that $\pi(D)$ is infinite.
Let $xD\in G/D$ be an element of prime order, say $q$. Since $C_G(x)$ is finite, there exists a prime number $p_0$ such that $C_{D_p}(x)=1$ for every $p\ge p_0$. In other words, $x$ acts as a fixed-point-free automorphism of $D_p$ for $p\ge p_0$.
Then, by (ii) in 10.5.1 of \cite{Ro}, we have
\begin{equation}
\label{D_p=[D_p,x]}
D_p = \{ [d,x] \mid d\in D_p \}.
\end{equation}

Let $D^*=\prod_{p<p_0} \, D_p$, and write $\overline G$ for $G/D^*$.
Then $D^*$ is finite, and consequently $\overline G$ is also an FCI-group by
Proposition \ref{G/N FCI-group}.
On the other hand, by (\ref{D_p=[D_p,x]}), we have
\[
\overline D = \{ [\overline d,\overline x] \mid d\in D \},
\]
and consequently $\langle \overline x \rangle \ntriangleleft \overline G$ and $C_{\overline G}(\overline x)$ is finite.
Let now $g$ be an arbitrary element of $G$.
Since $G/D$ is abelian, we have $[g,x]\in D$.
Hence there exists $d\in D$ such that $[\overline g,\overline x]=[\overline d,\overline x]$, and then $\overline g\in C_{\overline G}(\overline x)\overline d$. Thus $\overline G=C_{\overline G}(\overline x) \overline D$, and so
$|\overline G:\overline D|$ is finite. We conclude that $G/D$ is finite.

Let us now consider, for every odd $p\in\pi(D)$, the homomorphism $\Phi_p$ from
$G/D$ to $\PAut A_p$ induced by the action of $G$ on $A_p$ by conjugation.
If $\ker\Phi_p$ is trivial for some $p$, then we can conclude that $G/D$ is cyclic, since
$\PAut A_p$ is cyclic for odd $p$. Hence we may assume that $\ker\Phi_p$ is always non-trivial.
Since $G/D$ is finite and $\pi(D)$ is infinite, there exists an infinite subset $\pi^*$ of
$\pi(D)$ such that $\ker\Phi_p$ is the same subgroup, say $K/D$, for every $p\in\pi^*$.
If we choose an element $x\in K\smallsetminus D$, it follows that $A_p\subseteq C_G(x)$
for every $p\in\pi^*$. Thus $C_G(x)$ is infinite, which is a contradiction.
This completes the proof.
\end{proof}

Notice that, in \cite{Sha}, Shalev considered locally finite groups in which the centralizer of each element is either finite or of finite index. Clearly, such groups are FCI-groups. Then, Proposition 2.5 of \cite{Sha} shows that the FC-centre of any locally finite FCI-group is of finite index. In the previous proposition we gave a direct proof of this fact. However, contrary to \cite{Sha}, our proof does not depend on the classification of finite simple groups.

\begin{cor}
\label{metabelian}
Let $G$ be an infinite locally finite FCI-group. Then $G$ is metabelian.
\end{cor}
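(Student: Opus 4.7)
The plan is a direct combination of the two previous results on the structure of $G$ relative to its FC-centre $D$.

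First, from Proposition \ref{infinite abelian} we know that $D$ is a (periodic) Dedekind group on which every element of $G$ acts by a power automorphism, and, crucially, that $G' \le C_G(D)$. Second, from Proposition \ref{G/D cyclic} we know that $G/D$ is finite cyclic, hence abelian, which forces $G' \le D$. Combining these two inclusions,
\[
G' \;\le\; D \cap C_G(D) \;=\; Z(D).
\]
Since the centre of any group is abelian, $G'$ is abelian, whence $G'' = 1$ and $G$ is metabelian.

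The only subtlety worth pointing out is that we do \emph{not} need to split according to whether $D$ is abelian or properly Hamiltonian: even in the Hamiltonian case $D = Q_8 \times E \times B$, the centre $Z(D)$ is abelian, so the argument goes through uniformly. Thus no case analysis on the Dedekind structure of $D$ is required, and there is no real obstacle; the corollary is essentially a two-line consequence of Propositions \ref{infinite abelian} and \ref{G/D cyclic}.
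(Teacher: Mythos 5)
Your proof is correct and follows exactly the paper's argument: combine $G'\le C_G(D)$ from Proposition \ref{infinite abelian} with $G'\le D$ from Proposition \ref{G/D cyclic} to get $G'\le Z(D)$, which is abelian. The paper's proof is just a terser version of the same two-line deduction.
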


\begin{proof}
Let $D$ be the FC-centre of $G$. Then $G'\le C_G(D)$, by (i) of Proposition \ref{infinite abelian}, and so the result follows from Proposition \ref{G/D cyclic}.
\end{proof}

Before proceeding, we need the following lemma, which can be easily checked.

\begin{lem}
\label{PAut Q8}
An automorphism of the quaternion group is a power automorphism if and only if it is inner.
\end{lem}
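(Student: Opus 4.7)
The plan is to verify both implications by a direct finite computation on $Q_8=\{\pm 1,\pm i,\pm j,\pm k\}$, using that $Q_8$ is a (finite) Dedekind group of order $8$ with centre $\{\pm 1\}$.

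For the easy direction, I would observe that since every subgroup of $Q_8$ is normal, for any $g,x\in Q_8$ the element $gxg^{-1}$ lies in $\langle x\rangle$, and hence equals some power of $x$. Thus every inner automorphism is a power automorphism, which gives the inclusion $\Inn Q_8\subseteq \PAut Q_8$. Note that $|\Inn Q_8|=|Q_8:Z(Q_8)|=4$.

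For the reverse direction, I would show that $|\PAut Q_8|\le 4$ by a direct count. Any power automorphism $\varphi$ fixes $-1$ (the unique element of order $2$) and sends each of the generators $i$, $j$, $k$ to one of its own powers, i.e.\ to $\pm i$, $\pm j$, $\pm k$ respectively. Writing $\varphi(i)=i^a$ and $\varphi(j)=j^b$ with $a,b\in\{1,-1\}$, the identity $k=ij$ forces $\varphi(k)=i^aj^b$, and a check of the four possibilities shows that $i^aj^b\in\{k,-k\}$ in every case, so the constraint is automatically satisfied. This gives at most four candidates for $\varphi$, hence $|\PAut Q_8|\le 4$. Combined with $\Inn Q_8\subseteq \PAut Q_8$ and $|\Inn Q_8|=4$, we conclude $\PAut Q_8=\Inn Q_8$.

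There is no real obstacle here: the lemma reduces to a routine verification on a group of order $8$, and the only thing to be careful about is that the compatibility with the relation $ij=k$ is indeed automatic (so no additional candidates are excluded beyond the sign choices on $i,j$), which is precisely what makes the counts on both sides equal.
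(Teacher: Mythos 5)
Your proof is correct. The paper does not actually supply an argument for this lemma (it is stated as something that "can be easily checked"), and your direct verification --- inner automorphisms are power automorphisms because $Q_8$ is Dedekind, so $\Inn Q_8\subseteq\PAut Q_8$ with $|\Inn Q_8|=|Q_8:Z(Q_8)|=4$, while a power automorphism is determined by the sign choices on the generators $i$ and $j$, giving $|\PAut Q_8|\le 4$ --- is exactly the routine check the authors have in mind.
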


We are now ready to give a detailed description of the structure of infinite locally finite FCI-groups.

\begin{thm}
\label{determination locally finite FCI-groups}
A group $G$ is an infinite locally finite FCI-group if and only either
\begin{enumerate}
\item $G$ is an infinite periodic Dedekind group, or
\item $G=\langle g,D \rangle$, where $D$ is an infinite periodic Dedekind group with $D_2$ of finite rank, and $g$ acts on $D$ as a power automorphism $\varphi$ of order $m>1$ such that
$|G:D|=m$ and
\begin{equation}
\label{condition for locally finite}
\prod_{p\in\pi_0\cup\pi_1} \, |D_p| < \infty,
\end{equation}
where
\[
\pi_0 = \{ p\in \pi(D) \mid o(\varphi_p)<m \},
\]
and
\[
\pi_1 = \{ p\in \pi(D) \mid \text{$o(\varphi_p)=m$, $p>2$ and
$p\not\equiv 1\hspace{-10pt}\pmod m$} \}.
\]
\end{enumerate}
Furthermore, if {\rm (ii)} holds, then for every $\langle x \rangle \ntriangleleft G$ we have
\begin{equation}
\label{bound C_G(x) typical locally finite BCI-group}
|C_G(x)|
\le
m M \prod_{p\in\pi_0\cup\pi_1} \, |D_p|,
\end{equation}
where $M=|D_2|$ if $D_2$ is finite, and $M=2^{\hat r(D_2)}$ if $D_2$ is infinite.
\end{thm}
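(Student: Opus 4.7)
The plan is to argue the two implications separately and then extract the bound.

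\emph{Backward direction.} If $G$ is as in (i), every subgroup of $G$ is normal, so the FCI-condition holds vacuously, while local finiteness follows since finitely generated periodic Dedekind groups are finite. Suppose instead that $G$ is as in (ii). Then $G$ is infinite (as $D$ is), periodic (as $D$ is periodic and $g^m \in D$), and locally finite: any finitely generated $H \leq G$ satisfies $|H : H \cap D| \leq m$, so $H \cap D$ is a finitely generated periodic Dedekind group, hence finite. To check the FCI-condition, let $x \in G$ with $\langle x \rangle \not\triangleleft G$. Since every subgroup of $D$ is normalised by $D$ (Dedekind) and by $g$ (acting as a power automorphism), we must have $x \notin D$; write $x = g^k d$ with $1 \leq k \leq m-1$ and $d \in D$. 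A direct calculation shows that $C_D(x)$ coincides with the fixed-point set in $D$ of the automorphism $\varphi^k \psi_d$, where $\psi_d$ denotes conjugation by $d$ in $D$.

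Using the decomposition $D = D_2 \times D_{\mathrm{odd}}$, the inner automorphism $\psi_d$ is trivial on $D_{\mathrm{odd}}$ (abelian) and trivial on $D_2$ whenever $D_2$ is abelian; otherwise $D_2$ is Hamiltonian and, having finite rank, is in fact finite. In every case Lemma \ref{finite cent of autos}, applied to control the odd-primary part of $C_D(\varphi^k)$, together with the elementary bound $|C_{D_2}(x)| \leq M$ (with $M$ as in the statement) on the $2$-primary side, yields $|C_D(x)| \leq M \prod_{p \in \pi_0 \cup \pi_1} |D_p|$. Since $|C_G(x) : C_D(x)| \leq m$, this delivers both the finiteness of $C_G(x)$ and the bound $(\ref{bound C_G(x) typical locally finite BCI-group})$.

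\emph{Forward direction.} Let $G$ be an infinite locally finite FCI-group, and assume $G$ is not Dedekind. Let $D$ be its FC-centre. By Propositions \ref{infinite abelian} and \ref{G/D cyclic}, $D$ is an infinite periodic Dedekind group with $D = \{x \in G : \langle x \rangle \triangleleft G\}$, and $G/D$ is finite cyclic, say of order $m > 1$. Pick $g \in G$ with $G/D = \langle gD\rangle$; then $G = \langle g, D\rangle$ and $|G : D| = m$. Because every element of $D$ generates a normal subgroup of $G$, conjugation by $g$ induces a power automorphism $\varphi$ of $D$ whose order divides $m$. In fact $\varphi$ has order exactly $m$: were $\varphi^k = 1$ for some $1 \leq k < m$, then $D \leq C_G(g^k)$ would be infinite, yet $g^k \notin D$ would give $\langle g^k \rangle \not\triangleleft G$ and hence $C_G(g^k)$ finite, a contradiction. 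For each $1 \leq k \leq m-1$ the same reasoning shows $C_D(\varphi^k) = D \cap C_G(g^k)$ is finite, and Lemma \ref{finite cent of autos} then produces both the finite rank of $D_2$ and the product condition $(\ref{condition for locally finite})$.

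The main subtlety lies in the backward direction when $D$ is Hamiltonian: the composite $\varphi^k \psi_d$ is no longer a power automorphism, so Lemma \ref{finite cent of autos} does not apply directly to it. This is overcome by observing that a Hamiltonian Dedekind group of finite Pr\"ufer rank must have finite $2$-part, so the problematic $2$-primary factor reduces to the trivial bound $|D_2|$.
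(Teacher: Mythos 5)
Your backward direction is essentially the paper's argument (the paper splits into the cases $D_2$ infinite/finite and works with the odd part $A=\prod_{p>2}D_p$ rather than with the direct decomposition $C_D(x)=C_{D_2}(x)\times C_{D_{\mathrm{odd}}}(x)$, but the substance is the same), and your observation that a Hamiltonian $D$ with $D_2$ of finite rank has finite $D_2$ is exactly how that case is disposed of. The problem is in the forward direction, and, ironically, it is again the Hamiltonian case: your assertion that conjugation by an arbitrary generator $g$ of $G/D$ induces a power automorphism $\varphi$ of $D$ \emph{whose order divides} $m$ is unjustified. Indeed $\varphi^m$ is conjugation by $g^m\in D$, which is an \emph{inner} automorphism of $D$; when $D=Q\times A$ is Hamiltonian this inner automorphism is a power automorphism of order $2$ unless $g^m\in Z(D)$, so for a bad choice of $g$ one gets $o(\varphi)=2m\neq m=|G:D|$, and conclusion (ii) fails for that $g$. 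Worse, your subsequent appeal to Lemma \ref{finite cent of autos} then breaks down: that lemma needs $C_D(\varphi^k)$ finite for \emph{all} $k=1,\dots,o(\varphi)-1$, and $C_D(\varphi^m)=C_D(g^m)\supseteq Z(D)$ is infinite. Your fixed-point argument only rules out $\varphi^k=1$ for $k<m$, i.e.\ it gives $o(\varphi)\ge m$, not $o(\varphi)=m$.

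The paper closes exactly this hole before choosing $g$: writing $D=Q\times A$ with $Q$ trivial or quaternion and $A$ infinite abelian, it proves $G=DC_G(Q)$ by comparing $|D:C_D(Q)|=4$ with $|G:C_G(Q)|\le|\PAut Q|=|\Inn Q|=4$ (Lemma \ref{PAut Q8}), and then picks the generator $g$ of $G/D$ inside $C_G(Q)$. With that choice $\varphi$ is trivial on $Q$, $g^m\in C_D(Q)=Z(Q)\times A=Z(D)$, so $\varphi^m=1$, and the embedding $G/D\hookrightarrow\PAut A$ coming from $C_G(A)=D$ gives $o(\varphi)=o(\varphi|_A)=m$. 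You need this (or some equivalent normalisation of $g$ modulo $D$) before the rest of your forward argument, which is otherwise correct, can go through.
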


\begin{proof}
We first prove the `only if' part of the statement. Let $G$ be an infinite locally finite FCI-group and let $D$ be the FC-centre of $G$. By Proposition \ref{infinite abelian},
$D$ consists of all elements of $G$ which generate a normal subgroup of $G$, and $D$ is an infinite periodic Dedekind group.
We assume that $D$ is a proper subgroup of $G$, and show that (ii) holds.

Let us write $D=Q\times A$, where $Q$ is either trivial or isomorphic to the quaternion group, and $A$ is infinite abelian.
Then $C_G(A)=D$, since $G$ is an FCI-group. We also know from Proposition \ref{G/D cyclic} that $G/D$ is cyclic.
We claim that $G=DC_G(Q)$. This is obvious if $Q=1$, so assume that $Q$ is the quaternion group.
Observe that
\begin{equation}
\label{inequalities}
|D:C_D(Q)| = |D:C_G(Q)\cap D| \le |G:C_G(Q)| \le |\PAut Q|.
\end{equation}
Now $C_D(Q)=Z(Q)\times A$ has index $4$ in $D$, and by Lemma \ref{PAut Q8},
$\PAut Q=\Inn Q$ is of order $4$.
Hence the first inequality in (\ref{inequalities}) is an equality, and as a consequence
$G=DC_G(Q)$.

Let $m>1$ be the order of $G/D$, and let us choose a generator $gD$ of $G/D$.
Since $G=DC_G(Q)$, we may assume that $g\in C_G(Q)$. Then $g$ induces by conjugation a power automorphism $\varphi$ of $D$ which coincides with the identity on $Q$. Thus $o(\varphi)=o(\varphi_{|A})$. Since $C_G(A)=D$, we have a natural embedding of $G/D$ in $\PAut A$, and so $o(\varphi_{|A})=o(gD)=m$.
Thus $o(\varphi)=m$. Now if a power $g^k$ does not lie in $D$, then $\langle g^k \rangle \ntriangleleft G$, and $C_G(g^k)$ is finite.
Thus $C_D(\varphi^k)$ is finite for every $k=1,\ldots,m-1$, and we only need to apply
Lemma \ref{finite cent of autos} to get (ii).

Conversely, let $G$ be as in part (ii). The group $G$ is clearly locally finite, and every element of $D$ generates a normal subgroup of $G$, since $D$ is a Dedekind group and $\varphi$ is a power automorphism of $D$.
Thus it suffices to prove that the bound in (\ref{bound C_G(x) typical locally finite BCI-group}) holds for every $x\in G\smallsetminus D$.
Let us write $x=g^kd$ with $k\in\{1,\ldots,m-1\}$ and $d\in D$.
Assume first that $D_2$ is infinite.
Then $D$ is abelian, since $D_2$ is of finite rank.
Consequently
\begin{align*}
|C_G(x)|
&\le
|G:D| \, |C_D(x)| = m \, |C_D(g^k)| = m \,  |C_D(\varphi^k)|
\\
&\le m 2^{\hat r(D_2)} \prod_{p\in\pi_0\cup\pi_1} \, |D_p|,
\end{align*}
by Lemma \ref{finite cent of autos}.
This proves the result if $D_2$ is infinite.
On the other hand, if $D_2$ is finite, put $A=\prod_{p>2} \, D_p$.
Then $A\le Z(D)$, and
\[
|C_G(x)|\le |G:A| \, |C_A(x)| = m \, |D_2| \, |C_A(g^k)| = m \, |D_2| \,  |C_A(\varphi^k)|.
\]
Now the proof of Lemma \ref{finite cent of autos} shows that $C_A(\varphi^k)$ is contained in
the product $\prod_{p\in(\pi_0\cup\pi_1)\smallsetminus\{2\}} \, D_p$, and
(\ref{bound C_G(x) typical locally finite BCI-group}) follows.
\end{proof}

The following corollary is an immediate consequence of Theorem \ref{determination locally finite FCI-groups}.

\begin{cor}
\label{locally finite FCI->BCI}
Let $G$ be a locally finite group.
Then the following conditions are equivalent:
\begin{enumerate}
\item
$G$ is an FCI-group.
\item
$G$ is a BCI-group.
\item
There exists $n\in\N$ such that $|C_G(x)|\le n$ whenever $\langle x \rangle \ntriangleleft G$.
\end{enumerate}
\end{cor}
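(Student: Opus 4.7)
The plan is to establish the chain of implications (iii) $\Rightarrow$ (ii) $\Rightarrow$ (i) $\Rightarrow$ (iii), with the non-trivial content concentrated in the last step, which I would derive directly from Theorem \ref{determination locally finite FCI-groups}.

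The first two implications are essentially free: since $|C_G(x)\colon\langle x\rangle|\le |C_G(x)|$, condition (iii) plainly yields (ii), and a uniform bound on indices immediately gives a uniform finiteness of indices, so (ii) yields (i). No use of local finiteness is needed here.

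For the main implication (i) $\Rightarrow$ (iii), I would argue by cases using the structure theorem. If $G$ is finite, any $n\ge|G|$ works trivially. If $G$ is infinite, Theorem \ref{determination locally finite FCI-groups} applies. In case (i) of that theorem, $G$ is Dedekind, so there exist no elements $x$ with $\langle x\rangle\ntriangleleft G$, and (iii) holds vacuously (with, say, $n=1$). In case (ii), set
\[
n = mM\prod_{p\in\pi_0\cup\pi_1}|D_p|,
\]
which is a finite integer because (\ref{condition for locally finite}) holds and $M$ is finite in both sub-cases ($D_2$ finite or $D_2$ of finite rank). By Proposition \ref{infinite abelian}(i), $\langle x\rangle\ntriangleleft G$ forces $x\notin D$, and then the bound (\ref{bound C_G(x) typical locally finite BCI-group}) of Theorem \ref{determination locally finite FCI-groups} gives $|C_G(x)|\le n$.

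There is no real obstacle once Theorem \ref{determination locally finite FCI-groups} is available: the only mildly delicate point is to confirm that (iii) is formulated in terms of $|C_G(x)|$ rather than the index $|C_G(x)\colon\langle x\rangle|$, which is precisely what the explicit inequality (\ref{bound C_G(x) typical locally finite BCI-group}) provides. Hence the corollary follows with essentially no additional work.
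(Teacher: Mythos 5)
Your proposal is correct and follows exactly the route the paper intends: the paper gives no explicit proof, stating the corollary as an immediate consequence of Theorem \ref{determination locally finite FCI-groups}, and your argument simply spells out the trivial implications (iii) $\Rightarrow$ (ii) $\Rightarrow$ (i) together with the derivation of (iii) from the explicit bound (\ref{bound C_G(x) typical locally finite BCI-group}) (plus the finite and Dedekind cases). Nothing is missing.
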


Recall from the theory of cyclic extensions \cite[Section III.7]{Za} that, given a group $N$ and an automorphism $\varphi$ of $N$, all extensions $G=\langle g,N \rangle$ of $N$ by a finite cyclic group $C_m$, where $g$ acts on $N$ via $\varphi$, are obtained by choosing the power $g^m$ to be an element $n\in N$ such that:
\begin{enumerate}
\item
$\varphi(n)=n$.
\item
$\varphi^m$ coincides with the inner automorphism of $N$ induced by $n$.
\end{enumerate}
In particular, if $\varphi$ is of order $m$, these two conditions reduce to
$n\in C_{Z(N)}(\varphi)$.

\begin{rmk}{\rm
According to Theorem \ref{determination locally finite FCI-groups}, if we want to produce an infinite locally finite FCI-group $G$, we can proceed as follows.
We start by taking an infinite periodic Dedekind group $D$ with $D_2$ of finite rank, and then we choose power automorphisms $\varphi_p$ of the Sylow $p$-subgroups $D_p$ in such a way that the least common multiple $m$ of the orders of all the $\varphi_p$ is finite and greater than $1$, and that condition (\ref{condition for locally finite}) holds.
Then we can use the natural isomorphism between $\PAut D$ and $\Cr_{p\in\pi(D)}\, \PAut D_p$ to define a power automorphism $\varphi$ of $D$, and we can construct $G$ as an extension of $D$ by the cyclic group of order $m$, by using the method explained above.
Theoretically, selecting the automorphisms $\varphi_p$ is an easy task: if $D_p$ is abelian (in particular for all $p>2$) we know that $\PAut D_p$ is isomorphic to $(\Z/p^n\Z)^{\times}$ if
$\exp D_p=p^n<\infty$ and to $\Z_p^{\times}$ if $\exp D_p=\infty$, and the structure of these groups of units is well-known.
In practice, we have to face the problem that no general procedure is known to obtain a primitive root modulo $p^n$ (i.e.\ a generator of $(\Z/p^n\Z)^{\times}$) for odd $p$ in polynomial time.
As for the choice of $\varphi_2$ when $D_2$ is non-abelian, see the next remark.
}
\end{rmk}

\begin{rmk}
{\rm
\label{phi2 is inversion}
If $D_2$ is infinite in part (ii) of Theorem \ref{determination locally finite FCI-groups}, then $D_2$ must be abelian of infinite exponent, since it is of finite rank.
Thus $\PAut D_2\cong \Z_2^{\times}$, and since
$\varphi_2$ is of finite order, it must be either the identity or inversion.
Now observe that $2\not\in\pi_0$, by condition (\ref{condition for locally finite}).
Thus we can conclude that if $D_2$ is infinite then $\varphi_2$ is necessarily the inversion automorphism, and that $m=2$.
}
\end{rmk}

\section{Periodic BCI-groups}

This section is devoted to periodic BCI-groups.
Given a periodic BCI-group $G$, we prove that if $G$ is also locally graded, then it is locally finite.
We need the following two lemmas.

\begin{lem}[{\cite[Proposition 1]{Lu}}]
\label{pq}
Let $G$ be a finite soluble group.
If $p$, $q$, and $r$ are distinct primes dividing the order of $G$, then $G$ contains an element of order the product of two of these primes.
\end{lem}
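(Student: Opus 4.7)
The plan is to reduce the statement to the case $|G|=p^a q^b r^c$ via Hall's theorem, then invoke Higman's theorem on soluble groups all of whose elements have prime-power order in order to locate an element whose order involves two of the three primes, and finally extract from it an element of order equal to the product of two primes by taking a suitable power.

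Concretely, first I would use Hall's theorem for finite soluble groups: since $\{p,q,r\}\subseteq\pi(G)$ and $G$ is soluble, $G$ contains a Hall $\{p,q,r\}$-subgroup $H$ of order $p^a q^b r^c$ with $a,b,c\geq 1$. Any element of $H$ is an element of $G$, so it suffices to prove the statement for $H$; I would therefore replace $G$ by $H$ and assume $|\pi(G)|=3$. Next I would recall Higman's classical theorem: a finite soluble group in which every element has prime-power order has $|\pi(G)|\le 2$. Consequently, some $g\in G$ must have order $n=p^\alpha q^\beta r^\gamma$ with at least two of the exponents positive, say $\alpha,\beta\geq 1$. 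Then $g^{n/(pq)}$ has order precisely $pq$, which is the required element.

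The main obstacle is the middle step, namely the assertion that a finite soluble group with three prime divisors in its order cannot have all elements of prime-power order; for this I would simply cite Higman. If one insisted on a self-contained argument, one could induct on $|G|$ in a minimal counterexample, take a minimal normal subgroup $N$ (elementary abelian of exponent $s\in\{p,q,r\}$), and split on whether $s$ still divides $|G/N|$ (lifting a composite-order element from the quotient via induction) or $N$ is the full Sylow $s$-subgroup (in which case a standard coprime-action argument for an element of order $\ell\in\{p,q,r\}\setminus\{s\}$ acting on the elementary abelian $s$-group $N$ yields a nontrivial fixed point, hence an element of order $s\ell$). This alternative works but is visibly more technical than the Higman appeal and, given the brevity of the reduction-and-extraction, not worth pursuing.
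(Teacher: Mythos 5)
Your main argument is correct, and it is genuinely different from what the paper does: the paper offers no proof at all, simply importing the statement as Proposition~1 of Lucido's article on prime graphs. Your route --- pass to a Hall $\{p,q,r\}$-subgroup $H$ (which exists and is soluble, with $\pi(H)=\{p,q,r\}$ exactly, so that Higman's theorem on soluble groups all of whose elements have prime-power order forces some $g\in H$ to have order divisible by two of $p,q,r$, after which $g^{o(g)/(pq)}$ does the job) --- is a clean, complete derivation, and the preliminary Hall reduction is genuinely needed, since without it Higman only produces an element whose order involves two primes of $\pi(G)$ that need not lie in $\{p,q,r\}$; you handle this correctly. What your approach buys is self-containedness at the cost of invoking Higman's classification-free but nontrivial theorem; what the paper's citation buys is brevity, Lucido's own argument being an induction tailored to the prime graph. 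One caveat: your ``self-contained'' fallback sketch is not sound as stated. The claim that an element of prime order $\ell\neq s$ acting on an elementary abelian $s$-group must have a nontrivial fixed point is false (an element of order $3$ acts fixed-point-freely on $C_2\times C_2$, as in $A_4$ --- which is exactly why $A_4$ has no element of order $6$ and why the lemma needs three primes rather than two). Repairing that branch requires a Frobenius-complement-type analysis using both remaining primes, not a one-prime coprime-action argument. Since you explicitly do not rely on the fallback, this does not affect the validity of your proof, but the sketch should either be fixed or deleted.
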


\begin{lem}[{\cite[page 263]{Sh}}]
\label{involution}
Let $G$ be a periodic group having an involution with finite centralizer.
Then $G$ is locally finite.
\end{lem}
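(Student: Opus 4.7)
The statement is Shunkov's classical theorem, which the authors invoke via their citation to \cite{Sh}; the proof in the paper would simply be an appeal to that reference. Here is how I would sketch a proof from scratch. Let $t$ be the given involution, set $C = C_G(t)$, and write $n = |C|$. It suffices to show that every finitely generated subgroup $H$ of $G$ containing $t$ is finite, so we may assume that $|C_H(t)| \le n$. The initial observation is the dihedral structure: for every involution $u \in G$ the subgroup $\langle t, u \rangle$ is dihedral, and since $G$ is periodic the product $tu$ has finite order, so $\langle t, u \rangle$ is finite dihedral of order $2|tu|$. In particular, every pair of involutions in $G$ sits inside a common finite subgroup.

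The principal tool I would bring to bear is the Brauer--Fowler inequality, which provides a function $f \colon \N \to \N$ such that any finite group containing an involution whose centralizer has order at most $n$ has order at most $f(n)$. Applied to every finite subgroup of $H$ containing $t$, this yields a uniform bound $f(n)$ on their orders. The strategy is then to realize $H$ as a directed union of such finite subgroups: starting from $\langle t \rangle$ and adjoining the generators of $H$ one at a time, each new generator must be absorbed into a finite subgroup containing $t$ by exploiting the dihedral pairings $\langle t, u \rangle$ for the various involutions $u$ appearing in the conjugacy and multiplication structure of $H$, together with the periodicity of $G$. Since every subgroup in the resulting chain has order at most $f(n)$, the chain stabilizes and $H$ itself is finite.

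The main obstacle is precisely this upgrading step. One has to ensure that every element of $H$ is genuinely enveloped by a finite subgroup containing $t$; in a periodic group this is far from automatic, and Shunkov's original argument invests substantial combinatorial work into analysing the set of involutions together with the structure of their pairwise products in order to secure it. That the difficulty is essential is underscored by the infinite periodic groups of Golod, Grigorchuk and Gupta--Sidki mentioned in the introduction: it is precisely the hypothesis of an involution with finite centralizer, used in concert with Brauer--Fowler on all finite sections containing $t$, that rules out such pathologies and forces local finiteness.
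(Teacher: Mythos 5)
The paper gives no proof of this lemma at all: it is quoted as Shunkov's theorem with a citation to \cite{Sh}, exactly as you surmise in your first sentence. So the only thing to assess is whether your sketch could stand in for the missing argument, and it cannot, for two reasons.

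First, your principal tool is misstated. The Brauer--Fowler theorem does \emph{not} provide a function $f$ such that every finite group containing an involution with centralizer of order at most $n$ has order at most $f(n)$. The dihedral groups $D_{2m}$ are counterexamples: the centralizer of a reflection $t$ has order at most $4$, while $m$ is arbitrary. Ironically, these are precisely the subgroups $\langle t,u\rangle$ that your own first paragraph constructs, so your two paragraphs contradict each other. What Brauer--Fowler actually yields is a bound on the index of some proper subgroup of a finite group of even order (equivalently, a bound on the order of a finite \emph{simple} group with prescribed involution centralizer). Consequently there is no uniform bound $f(n)$ on the orders of the finite subgroups of $H$ containing $t$, and the final step of your argument --- ``every subgroup in the chain has order at most $f(n)$, so the chain stabilizes'' --- collapses. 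Second, even granting some bound, the step you describe as ``the main obstacle,'' namely that every element of a finitely generated subgroup is enveloped by a finite subgroup containing $t$, is the entire content of Shunkov's theorem; you explicitly leave it to ``substantial combinatorial work'' in the original source. A sketch whose key step is deferred to the very reference being replaced, and whose quantitative engine is a false statement, is not a proof. For the purposes of this paper the correct treatment is the one the authors adopt: cite Shunkov and move on.
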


Recall that a quotient of a locally graded group need not be locally graded. However, it is straightforward to see that the quotient of a locally graded group over a finite normal subgroup is again locally graded.

\begin{thm}
\label{graded}
Every locally graded periodic BCI-group is locally finite.
\end{thm}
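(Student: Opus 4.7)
The plan is to reduce to a finitely generated infinite counterexample and then produce an involution with finite centralizer; by Lemma \ref{involution} this would force $G$ to be locally finite, hence finite (since $G$ is finitely generated and periodic), a contradiction. Since local finiteness is a local property, if $G$ is not locally finite then some finitely generated subgroup $H\le G$ is infinite; because subgroups of locally graded groups are locally graded, and both periodicity and the BCI bound $n$ pass to subgroups, $H$ is itself an infinite finitely generated periodic locally graded BCI-group. Replacing $G$ by $H$, I assume $G$ itself is finitely generated, periodic, locally graded, BCI with bound $n$, and infinite. A periodic Dedekind group is locally finite (being either abelian or of the form $Q_{8}\times E\times A$ with $E$ elementary abelian $2$-group and $A$ periodic abelian of odd exponent), so a finitely generated periodic Dedekind group is finite. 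Hence $G$ is not Dedekind, and some $x\in G$ satisfies $\langle x\rangle\ntriangleleft G$, giving $|C_{G}(x)|\le n\cdot o(x)<\infty$.

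If $G$ admits a non-central involution $t$, then $\langle t\rangle\ntriangleleft G$ (an order-$2$ subgroup is normal if and only if it is central), the BCI bound yields $|C_{G}(t)|\le 2n<\infty$, and Lemma \ref{involution} completes the proof. The delicate situation is when every involution of $G$ is central, including the extreme subcase that $G$ has no involutions at all. In this case I would exploit local gradedness together with Lemma \ref{pq}: every finite subgroup $F$ of $G$ inherits the BCI bound $n$, and when $F$ has odd order (automatic if $G$ has no involution) it is soluble by the Feit--Thompson theorem, so Lemma \ref{pq} applies whenever $|F|$ is divisible by three distinct primes and yields an element of order $pq$ for two of them. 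The strategy is to combine this structural information with the BCI control on centralizers, and with the finite quotients supplied by local gradedness, to derive a contradiction --- either by producing a non-central involution (contradicting the case assumption) or by exhibiting a cyclic non-normal subgroup whose centralizer overruns $n$.

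The main obstacle is exactly this involution-free subcase, where Shunkov's theorem is unavailable and the argument must rely entirely on the interplay between Lemma \ref{pq}, the BCI bound on finite subgroups, and the finite quotients provided by local gradedness. I expect the technical core of the proof to consist of showing that the set of primes dividing orders of elements $x\in G$ with $\langle x\rangle\ntriangleleft G$ is severely restricted under these hypotheses --- likely bounded in terms of $n$ via a repeated application of Lemma \ref{pq} inside finite soluble subgroups --- and then that this restriction is incompatible with $G$ being simultaneously finitely generated, locally graded, periodic, and infinite.
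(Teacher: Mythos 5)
Your opening reduction is fine (passing to a finitely generated infinite counterexample, noting that a finitely generated periodic Dedekind group is finite, and invoking Lemma \ref{involution} once an involution with finite centralizer is found), but the proof stops exactly where the real work begins: the ``delicate situation'' you defer is never resolved, and the two sentences beginning ``The strategy is to combine\dots'' and ``I expect the technical core\dots'' are a statement of intent, not an argument. Two concrete things are missing. First, the paper does not split into cases according to whether involutions are central; it first shows (via Proposition \ref{infinite abelian} and the fact that $G'\le C_G(D)$) that the FC-centre $D$ is finite, and then replaces $G$ by $G/D$, which by Proposition \ref{G/N FCI-group} is still a BCI-group and now has the property that \emph{every} non-trivial cyclic subgroup is non-normal. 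After this reduction every element, in particular every involution, has finite centralizer, so Lemma \ref{involution} forces $G$ to have no involutions at all, and your ``all involutions central'' subcase never arises. Without this reduction your plan has no way to get started on the involution-free case.

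Second, and more seriously, the endgame is absent. A restriction on the primes dividing element orders is \emph{not} ``incompatible with $G$ being simultaneously finitely generated, locally graded, periodic, and infinite'': infinite finitely generated residually finite $p$-groups exist (Golod, Grigorchuk, Gupta--Sidki), so no amount of prime-set control alone can finish the proof. The paper's actual contradiction runs through the finite residual $R$: using Feit--Thompson and Lemma \ref{pq} one shows $\pi(G/R)$ is finite, the BCI bound $n$ caps the order of every $p$-element of $G/R$ at $p^n$, hence $G/R$ has finite exponent, and then Zelmanov's solution to the Restricted Burnside Problem --- the deepest input to the whole theorem, which your proposal never mentions --- gives that $G/R$ is finite. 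Only then does local gradedness deliver the contradiction: $R$ is finitely generated, so it has a proper subgroup of finite index, which would be a finite-index subgroup of $G$ not containing $R$. Without Zelmanov and the finite-residual argument, the proof cannot be completed along the lines you sketch.
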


\begin{proof}
Let $G$ be a finitely generated locally graded periodic BCI-group, and suppose for a contradiction that $G$ is infinite.
By hypothesis, there exists a positive integer $n$ such that
$|C_G(x):\langle x \rangle|\le n$ whenever $\langle x \rangle\ntriangleleft G$.

Let $D$ be the FC-centre of $G$.
By (i) of Proposition \ref{infinite abelian}, we have $G'\le C_G(D)$.
If $G'$ is a Dedekind group, then $G$ is soluble and, being periodic and finitely generated, $G$ is finite.
This is a contradiction.
Thus there exists $x\in G'$ such that $\langle x \rangle \ntriangleleft G$.
Hence $C_G(x)$ is finite and so also $D$ is finite.
Then $G/D$ is an infinite locally graded group and, by Proposition \ref{G/N FCI-group}, it is also a BCI-group. In particular
$G/D$ is a counterexample with trivial FC-centre, and without loss of generality we may assume that $\l x\r \ntriangleleft G$ for every $x\in G$, $x\neq 1$.
Thus, by Lemma \ref{involution}, $G$ has no involutions. As a consequence every finite quotient of $G$ is of odd order, and by the Feit-Thompson Theorem, is soluble.

Now let $p,q>n$ be prime numbers. If $x\in G$ is of order $pq$, then
\[
p = |\langle x \rangle:\langle x^p \rangle| \le |C_G(x^p):\langle x^p \rangle| \le n.
\]
Hence $G$ does not have any elements of order $pq$, and the same is true for all quotients of $G$. Let $R$ be the finite residual of $G$, i.e.\ the intersection of all subgroups of $G$ of finite index, and let us show that $\pi(G/R)$ is finite.
Suppose, by way of contradiction, that $\pi(G/R)$ is infinite.
Then we can find elements $x,y,z\in G$ such that $o(xR)=p$, $o(yR)=q$ and $o(zR)=r$ are three distinct primes greater than $n$.
By the definition of $R$, there exists $N\lhd G$ of finite index such that $x,y,z\not\in N$.
Then $o(xN)=p$, $o(yN)=q$ and $o(zN)=r$.
But, as mentioned above, $G/N$ is soluble and does not contain any elements of order $pq$, $pr$, or $qr$.
This is impossible, according to Lemma \ref{pq}.

Next, given $p\in\pi(G/R)$, let us consider an arbitrary non-trivial $p$-element $gR\in G/R$, say of order $p^k$.
If $k>n$ then
\[
|C_G(g^{p^n}):\langle g^{p^n} \rangle| \ge |\langle g \rangle:\langle g^{p^n} \rangle|
= p^n > n,
\]
a contradiction.
Hence the order of all $p$-elements of $G/R$ is at most $p^n$, and by the finiteness of $\pi(G/R)$, it follows that $G/R$ is a group of finite exponent. Finally, since $G/R$ is finitely generated and residually finite, Zelmanov's positive solution to the Restricted Burnside Problem yields that $G/R$ is finite. It follows that $R$ is finitely generated. Since $G$ is locally graded, there exists $K<R$ such that $|R:K|$ is finite, and then also $|G:K|$ is finite. This is the final contradiction, because $R$ is contained in all subgroups of $G$ of finite index.
\end{proof}

\noindent
\textit{Acknowledgment\/}. We wish to thank Professor D.\,J.\,S.\ Robinson for interesting discussions and helpful comments.


\begin{thebibliography}{10}
\bibitem{FLTT}
G.\,A.\ Fern\'andez-Alcober, L.\ Legarreta, A.\ Tortora, and M.\ Tota,
A restriction on centralizers in finite groups,
\textit{J.\ Algebra\/} \textbf{400} (2014), 33--42.

\bibitem{FLTT2}
G.\,A.\ Fern\'andez-Alcober, L.\ Legarreta, A.\ Tortora, and M.\ Tota,
Some restrictions on normalizers or centralizers in finite $p$-groups,
{\tt arXiv:1311.2608v1 [math.GR]},
to appear in \textit{Israel J.\ Math.\/}

\bibitem{Fo}
P.\ Fong,
On orders of finite groups and centralizers of $p$-elements,
\textit{Osaka J.\ Math.\/} \textbf{13} (1976), 483--489.

\bibitem{Go}
E.\,S. Golod, On nil-algebras and finitely approximable $p$-groups, {\it Izv.\ Akad.\ Nauk SSSR Ser.\ Mat.\/} {\bf 28} (1964), 273--276.

\bibitem{Gr} R.\,I.\ Grigorchuk, On Burnside's problem on periodic groups, {\it Funktsional.\ Anal.\ i Prilozhen.\/} {\bf 14} (1980), 53-–54.

\bibitem{GS}
N.\ Gupta, and S.\ Sidki, On the Burnside problem for periodic groups, {\it Math.\ Z.\/} {\bf 182} (1983), 385-–388.

\bibitem{HM}
B.\ Hartley, and T.\ Meixner,
Finite soluble groups containing an element of prime order whose centralizer is small,
\textit{Arch.\ Math.\/} \textbf{36} (1981), 211--213.

\bibitem{Kh}
E.\,I.\ Khukhro,
\textit{Nilpotent Groups and Their Automorphisms\/},
de Gruyter, Berlin-New York, 1993.

\bibitem{KS} M.\ Kuzucuo$\rm\breve{g}$lu, and P.\ Shumyatsky, On local finiteness of periodic residually finite
groups, {\it Proc. Edinb. Math. Soc. (2)} {\bf 45} (2002), 717--721.

\bibitem{LS}
R.\,C.\ Lindon, and P.\,E.\ Schupp,
\textit{Combinatorial Group Theory\/},
Springer-Verlag, Berlin-Heidelberg-New York, 1977.

\bibitem{Lu}
M.\,S.\ Lucido,
The diameter of a prime graph of a finite group,
\textit{J.\ Group Theory\/} \textbf{2} (1999), 157--172.

\bibitem{Ro3}
D.\,J.\,S.\ Robinson,
Groups in which normality is a transitive relation,
\textit{Proc.\ Camb.\ Phil.\ Soc.\/} {\bf 60} (1964), 21--38.

\bibitem{Ro}
D.\,J.\,S.\ Robinson,
{\it A Course in the Theory of Groups\/},
second edition, Springer-Verlag, New York, 1996.

\bibitem{Roz}
A.\,V.\ Rozhkov,
Centralizers of elements in a group of tree automorphisms, \textit{Russian Acad.\ Sci.\ Izv.\ Math.\/} \textbf{43} (1994), 471--492.

\bibitem{Sha} A.\ Shalev, Profinite groups with restricted centralizers, {\it Proc. Amer. Math. Soc.} {\bf 122} (1994), 1279--1284.

\bibitem{Shu} P.\ Shumyatsky,
Centralizers in locally finite groups, {\it Turkish J. Math.} {\bf 31} (2007), 149-–170.

\bibitem{Sh}
V.\,P.\ Shunkov,
On periodic groups with an almost regular involution,
\textit{Algebra and Logic\/} \textbf{11} (1972), 260--272.

\bibitem{Sid}
S.\ Sidki,
On a $2$-generated infinite $3$-group: subgroups and automorphisms, \textit{J.\ Algebra\/} \textbf{110} (1987), 24--55.

\bibitem{Za}
H.\,J.\ Zassenhaus,
{\it The Theory of Groups\/}, second edition,
Chelsea Publishing Company, New York, 1958.
\end{thebibliography}
\end{document}